\newtheorem{Theorem}{Theorem}[section]
\newtheorem{Lemma}[Theorem]{Lemma}
\newtheorem{Remark}[Theorem]{Remark}
\newtheorem*{AssumptionT'}{Assumption T${}'$}
\newtheorem*{AssumptionT''}{Assumption T${}''$}
\def\eqref#1{(\ref{#1})}
\def\<<{\prec}
\def\a{\alpha}
\def\b{\beta}
\def\eps{\varepsilon}
\def\1{\mathbbold{1}}
\def\mathbbold{}
\def\mathbb{\Bbb}
\begin{document}
\bibliographystyle{plain}

\title {Polynomial loss of memory for maps of the interval with a neutral fixed point}

\author{Romain Aimino\thanks {Aix Marseille Universit\'e, CNRS, CPT, UMR 7332, 13288 Marseille, France and
Universit\'e de Toulon, CNRS, CPT, UMR 7332, 83957 La Garde, France.
e-mail:$<$aimino@cpt.univ-mrs.fr$>$.}
\and Huyi Hu \thanks{Mathematics Department, Michigan State University,
East Lansing, MI 48824, USA.
e-mail: $<$hu@math.msu.edu$>$.}
\and Matt Nicol \thanks{Department of Mathematics,
University of Houston,
Houston Texas,
USA. e-mail: $<$nicol@math.uh.edu$>$.}
\and Andrei T\"or\"ok \thanks{Department of Mathematics,
University of Houston,
Houston Texas,
USA. e-mail: $<$torok@math.uh.edu$>$ and Institute of Mathematics of
  the Romanian Academy, P.O. Box 1--764, RO-70700, Bucharest, Romania.}
\and Sandro Vaienti
\thanks{Aix Marseille Universit\'e, CNRS, CPT, UMR 7332, 13288 Marseille, France and
Universit\'e de Toulon, CNRS, CPT, UMR 7332, 83957 La Garde, France.
e-mail:$<$vaienti@cpt.univ-mrs.fr$>$.}}

\maketitle

\begin{abstract}
We give an example of a sequential dynamical system consisting of  intermittent-type maps which exhibits loss of memory with a polynomial rate of decay.
A uniform bound holds for the upper rate of memory loss. The maps may be chosen in any sequence, and the bound holds for all compositions.
\end{abstract}

\section{Introduction}
\setcounter{equation}{0}

The notion of loss of memory for non-equilibrium dynamical systems was introduced in the 2009  paper by Ott, Stenlund and Young \cite{OSY}; they wrote:

 \small  Let $\rho_0$ denote an initial probability density w.r.t. a reference measure $m$, and suppose its time evolution is given by $\rho_t$.
 One may ask if these probability distributions retain memories of their past.
 We will say a system {\bf loses its memory in the statistical sense} if for two initial distributions $\rho_0$ and $\hat{\rho_0}$, $\int|\rho_t- \hat{\rho_t}|dm\rightarrow 0$.
 \normalsize

In~\cite{OSY} the rate of  convergence of the two densities was proved to be exponential for certain sequential dynamical systems composed
of one-dimensional piecewise expanding maps. Coupling was the technique used for the proof. The same technique was successively applied to time-dependent Sinai billiards with moving scatterers by Stenlund, Young, and Zhang \cite{SYZ} and it gave again an exponential rate.  A different approach, using the Hilbert projective metric, allowed Gupta, Ott and T\"or\"ok \cite{GOT} to obtain exponential loss of memory for time-dependent multidimensional piecewise expanding maps. 

All the previous papers prove an  exponential loss of memory in the strong sense, namely
$$
\int |\rho_t-\hat{\rho_t}|dm \le C e^{-\alpha t}.
$$
In the invertible setting, Stenlund \cite{S} proves  loss of memory in the weak-sense for random composition of Anosov diffeomorphisms, namely
$$
\left|\int f\circ{\mathcal T}_nd\mu_1-\int f\circ{\mathcal T}_n d\mu_2 \right| \le C
e^{-\alpha t}
$$
where $f$ is a H\"{o}lder observable, ${\mathcal T}_n$ denotes the composition of $n$ maps and  $\mu_1$ and $\mu_2$ are two probability measures absolutely continuous with respect to the Riemannian volume whose densities are H\"older. It is easy to see that loss of memory in the strong  sense implies loss of memory in the weak sense, for densities in the corresponding function spaces and $f\in L^\infty_m$.


 A natural question is: are there examples of time-dependent systems
 exhibiting loss of memory with a slower rate of decay, say polynomial,
 especially in the strong sense? We will construct such an example in this
 paper as a (modified) Pomeau-Manneville map:
 \begin{equation}\label{PM}
   T_{\a}(x)= \begin{cases}
     x + \frac{3^{\a}}{2^{1+\a}}x^{1+\a}, \ 0\le x \le 2/3\\
     3x-2, \ 2/3 \le x \le 1
   \end{cases} \qquad 0 < \alpha <1.
 \end{equation}
    We use this version of the Pomeau-Manneville intermittent map because the derivative is increasing on [0,1), where it is defined, and this allows us to simplify the exposition. We believe the result remains true for time-dependent systems comprised of  the {\em usual} Pomeau-Manneville maps, for instance the version studied in \cite{LSV}.\footnote{See the ``Note added in proof'' at the end of the paper.} We will refer quite often to \cite{LSV} in this note. As in \cite{LSV}, we will identify the unit interval $[0,1]$ with the circle $S^1$, in such a way the map becomes continuous. 

We will see in a moment how an initial density evolves  under composition with maps which are slight perturbations of (\ref{PM}). To this purpose we will  define the perturbations of the usual
    Pomeau-Manneville map that we will consider. 

     The perturbation will be defined by considering maps $T_{\b}(x)$ as
     above with $0< \b \le \a$. Note that $T_{\b}=T_{\a}$ on $2/3 \le x \le 1$. 
  The reference measure will be Lebesgue ($m$).
  If $0< \b_k \le \a$ is chosen, we denote by $P_{\b_k}$ the {\em Perron-Frobenius} (PF) transfer operator associated to the map $T_{\b_k}.$

Let us suppose $\phi, \ \psi$ are two observables in an appropriate (soon to be defined) functional space; then the  basic quantity that we have to control is
\begin{equation}\label{DC}
\int |P_{\b_n}\circ \cdots \circ P_{\b_1}(\phi)-P_{\b_n}\circ \cdots \circ P_{\b_1}(\psi)|dm.
\end{equation}
 Our goal is to show that  it decays polynomially fast and independently of
 the sequence $P_{\b_n}\circ \cdots \circ P_{\b_1}$: we stress that there
 is no probability vector to weight the $\b_k$.
 Note that, by the results of \cite{Sarig}, one cannot have in general a
 faster than polynomial decay, because that is the (sharp) rate when iterating a  single map $T_\beta$, $0< \beta <1$.

 In order to prove our
 result, Theorem~\ref{thm.main}, we will follow the strategy used in \cite{LSV} to get a polynomial
 upper bound (up to a logarithmic correction) for the correlation decay. We introduced there a perturbation
 of the transfer operator which was, above all,  a  technical tool to
 recover the loss of dilatation around the neutral fixed point by replacing
 the observable with its conditional expectation to a small ball around
 each point. It turns out that the same technique allows us to control the evolution of two densities under concatenation of maps if we can control the distortion of this sequence of maps. The control of distortion  will be, by the way, the major difficulty of this paper.

 Note that the convergence of the quantity (\ref{DC}) implies the decay of the non-stationary correlations, with respect to $m$:
$$\begin{aligned}
\left|\int \psi(x) \phi\circ T_{\b_n}\circ \cdots \circ T_{\b_1}(x) dm-\int \psi(x) dm \ \int \phi\circ T_{\b_n}\circ \cdots \circ T_{\b_1}(x) dm \right| \\ \le
\|\phi\|_{\infty} \left\|P_{\b_n}\circ \cdots \circ P_{\b_1}(\psi)-P_{\b_n}\circ \cdots \circ P_{\b_1} \left({\bf 1} \left(\int \psi dm\right)\right) \right\|_1 \end{aligned}
$$
 provided $\phi$ is essentially bounded and ${\bf 1}(\int \psi dm)$ remains in the functional space where the convergence of (\ref{DC}) takes place. In particular, this holds for $C^1$ observables, see Theorem~\ref{thm.main}. 

 Conze and Raugy \cite{CR} call the decorrelation described above  {\em decorrelation} for the {\em sequential dynamical system} $T_{\b_n}\circ \cdots \circ T_{\b_1}.$
Estimates on the rate of decorrelation (and the function space in which decay occurs)  are a key ingredient in the Conze-Raugy theory to establish central limit theorems for the sums $\sum_{k=0}^{n-1}\phi(T_{\b_k}\circ \cdots \circ T_{\b_1}x)$, after centering and normalisation. The question could be formulated in this way:
does the ratio
$$
\frac{\sum_{k=0}^{n-1}[\phi \circ T_{\b_k}\circ \cdots \circ T_{\b_1}(x)-\int \phi \circ T_{\b_k}\circ \cdots \circ T_{\b_1} dm] }{\|\sum_{k=0}^{n-1}\phi\circ T_{\b_k}\circ \cdots \circ T_{\b_1}\|_2}
$$
converge in distribution  to the normal law $\mathcal{N}(0,1)$?

It would be interesting to establish such a limit theorem for the sequential dynamical system constructed with our intermittent map (\ref{PM}). 
Besides the central limit theorem, other interesting questions could be considered for our sequential dynamical systems, for instance the existence of {\em concentration inequalities} (see the recent work \cite{AR} in the framework of the Conze-Raugy theory), and the existence of {\em stable laws}, especially  for perturbations of maps $T_{\alpha}$ with $\alpha>1/2$, which is the range for which the unperturbed map exhibits stable laws~\cite{GO}.

We said above that we did not choose the  sequence of maps $T_{\b}$ according to some probability distribution. A random dynamical system has been considered in the recent paper~\cite{BH} for similar perturbations of the usual Pomeau-Manneville map. To establish a correspondence with our work, let us say that those authors perturbed the map $T_{\alpha}$ by modifying again the slope, but taking this time finitely many values  $0 < \alpha_1 < \alpha_2 < \dots < \alpha_r \le 1$, with a finite discrete law. This random transformation has a unique stationary measure, and the authors  consider annealed correlations on the space of H\"older functions. They prove in~\cite{BH} that such annealed correlations decay polynomially at a rate bounded above by $n^{1-\frac{1}{\alpha_1}}$.

As a final remark, we would like to address the question of proving the loss of memory for intermittent-like maps, but with the sequence given by adding a varying constant to the original map, considered to act on the  unit circle (additive noise). This problem seems much harder and a possible strategy would be to consider induction schemes, as it was done recently in \cite{SS} to prove stochastic stability in the strong sense.

 \textbf{NOTATIONS.}  We will index the perturbed maps and transfer operators respectively as $T_{\b_k}$ and $P_{\b_k}$ with $0< \b_k\le \a$.  Since we will be interested  in concatenations like $P_{\b_n}\circ P_{\b_{n-1}}\circ\cdots \circ P_{\b_m}$  we will use equivalently  the following notations
 $$
 P_{\b_{n}}\circ P_{\b_{n-1}}\circ \cdots \circ P_{\b_m}= P_{n}\circ P_{n-1}\circ\cdots \circ P_m.
 $$
 We will see that very often the choice of $\b_k$ will be not important in the construction of the concatenation; in this case we will adopt the useful notations, where the exponent of the $P$'s is the number of transfer operators in the concatenation:
 $$
 P_{\b_{n}}\circ P_{\b_{n-1}}\circ\cdots \circ P_{\b_m}:=P_m^{n-m+1}$$$$ P_k^n=P_{k+n-1}\circ P_{k+n-2}\circ\cdots \circ P_{k}
 $$
In the same way, when we concatenate  maps we will use the notation $$T_m^{n-m+1} := T_n\circ T_{n-1}\circ\cdots \circ T_m$$ instead of $T_{\b_n}\circ T_{\b_{n-1}}\circ\cdots \circ T_{\b_m}.$

Finally, for any sequences of numbers $\{a_n\}$ and $\{b_n\}$, we will write
$a_n\approx b_n$
if $c_1b_n\le a_n\le c_2b_n$ for some constants $c_2\ge c_1>0$. The first derivative will be denoted as either $T'$ or $DT$ and the value of $T$ on the point $x$ as either $Tx$ or $T(x).$

 \section{The cone, the kernel, the decay}

 Thanks to a general theory by  Hu \cite{HH}, we know that the density $f$  of the absolutely continuous invariant measure of  $T_{\a}$ in the neighborhood of $0$  satisfies  $f(x)\le \mbox{constant} \ x^{-\a}$, where the value of the constant has an expression in terms  of the value of $f$ in the pre-image of $0$ different from $0$. We will construct  a cone which is preserved by the transfer operator of each $T_{\b}, 0<\b\le \a$, and the density of each $T_{\b}$  will be the only fixed point of a suitable subset of that cone.

  We   define the cone of  functions
    $$
    {\mathcal C_1}:= \{f\in C^0(]0,1]); \ f\ge 0; \ f \ \mbox{decreasing};\  X^{\a+1}f \ \mbox{increasing}\} 
    $$
    where $X(x)=x$ is the identity function.
    \begin{Lemma}
    The cone ${\mathcal C}_1$ is {\em invariant} with respect to the operators $P_{\b}, \ 0< \b \le \a < 1.$
    \end{Lemma}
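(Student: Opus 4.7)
The plan is to verify the three defining conditions of $\mathcal{C}_1$ separately, starting from the explicit formula
\begin{equation*}
(P_\beta f)(x) = \frac{f(h_1(x))}{T_\beta'(h_1(x))} + \frac{f(h_2(x))}{3},
\end{equation*}
where $h_1\colon[0,1]\to[0,2/3]$ is the inverse of the left branch of $T_\beta$ and $h_2(x)=(x+2)/3$ is the inverse of the right branch. Positivity of $P_\beta f$ and its continuity on $]0,1]$ are immediate (note $T_\beta'\ge 1$).

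For the decreasing property, I would observe that each summand is a product of non-negative decreasing functions of $x$: $f\circ h_i$ is decreasing because $f$ decreases and each $h_i$ increases, while $T_\beta''(y)=c_\beta(1+\beta)\beta\, y^{\beta-1}\ge 0$ (with $c_\beta=3^\beta/2^{1+\beta}$) makes $T_\beta'\circ h_1$ increasing, so $1/(T_\beta'\circ h_1)$ is decreasing. The elementary fact that a product of two non-negative decreasing functions is decreasing then concludes the step, bypassing any need to differentiate $f$.

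The crux is that $x^{\alpha+1}(P_\beta f)(x)$ is increasing. Setting $g(y)=y^{\alpha+1}f(y)$, which is increasing by hypothesis, I would rewrite each summand as
\begin{equation*}
x^{\alpha+1}\,\frac{f(h_i(x))}{T_\beta'(h_i(x))} = \left(\frac{x}{h_i(x)}\right)^{\alpha+1}\frac{g(h_i(x))}{T_\beta'(h_i(x))}.
\end{equation*}
For the right branch ($T_\beta'\equiv 3$), $x/h_2(x)=3x/(x+2)$ is visibly increasing, $g\circ h_2$ is increasing, so the right-branch term is a product of two non-negative increasing functions of $x$, hence increasing. For the left branch, I would parametrize by $y=h_1(x)$ and use $x=y+c_\beta y^{1+\beta}$ to get
\begin{equation*}
\left(\frac{x}{y}\right)^{\alpha+1}\frac{1}{T_\beta'(y)} = F(c_\beta y^\beta),\qquad F(u)=\frac{(1+u)^{\alpha+1}}{1+(1+\beta)u}.
\end{equation*}
A direct computation gives
\begin{equation*}
F'(u) = \frac{(1+u)^\alpha\bigl[(\alpha-\beta)+\alpha(1+\beta)u\bigr]}{\bigl(1+(1+\beta)u\bigr)^2},
\end{equation*}
which is non-negative precisely because $\beta\le\alpha$. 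Hence $F(c_\beta y^\beta)\cdot g(y)$ is a product of non-negative increasing functions of $y$, and since $h_1$ is increasing in $x$, the left-branch term is increasing in $x$ as well. Summing the two contributions yields the claim.

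The main obstacle is finding the right single-variable substitution $u=c_\beta y^\beta$: once the left-branch factor collapses to $F(u)$, the sign of $F'(u)$ shows that $\beta\le\alpha$ is \emph{exactly} the condition needed, explaining both why the cone is tailored with the exponent $\alpha+1$ and why it is preserved by every perturbation $T_\beta$ with $\beta\in(0,\alpha]$.
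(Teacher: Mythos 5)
Your proposal is correct and follows essentially the same route as the paper: the same split into left and right inverse branches, the same rewriting of the left-branch term as $g(y)\,(1+\chi)^{\alpha+1}/(1+(1+\beta)\chi)$ with $\chi=c_\beta y^\beta$, and the same key observation that $\beta\le\alpha$ makes this factor increasing. You merely make explicit two points the paper leaves to the reader (the computation of $F'$ and the verification that $P_\beta f$ is decreasing), and both are carried out correctly.
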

    \begin{proof}
    Put $T_{\b}^{-1}(x)=\{y_1, y_2\}, y_1<y_2;$ put also $\chi_{\b}=\frac{3^{\b}y_1^{\b}}{2^{1+\b}}.$ Then a direct computation shows that
  $$
  X^{\a+1}P_{\b}f(x)= \frac{f(y_1)y_1^{\a+1}(1+\chi_{\b})^{\a+1}}{1+(1+\b)\chi_\b}+f(y_2)\left(\frac{3y_2-2}{y_2}\right)^
  {\a+1}\frac{y_2^{\a+1}}{3}.
  $$
  The result now follows since the maps  $x\rightarrow x^{\a+1}f(x)$, $x\rightarrow \chi_{\b}$, $x\rightarrow y_1$, $x\rightarrow y_2$ are increasing. The fact that $\a\ge \b$ implies the  monotonicity of $\chi\rightarrow  \frac{(1+\chi)^{\a+1}}{1+(1+\b)\chi}.$
    \end{proof}
    We now denote $m(f)=\int_0^1 f(x) dx$ and recall that for any $0<\b<1$ we have $m(P_{\b}f)=m(f).$


    \begin{Lemma}
      Given $0 < \a < 1$, the cone
      $$
      {\mathcal C}_2:=\{f\in {\mathcal C}_1\cap L^1_m; \ f(x)\le a x^{-\a}\ m(f)\}
      $$
      is preserved by all the operators $P_{\b}, \ 0< \b  \le \a$, provided
      $a$ is large enough.
   \end{Lemma}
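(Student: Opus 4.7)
The plan is to verify the three defining properties of $\mathcal{C}_2$ for $P_\beta f$. Membership in $\mathcal{C}_1$ is granted by the preceding lemma, and mass conservation $m(P_\beta f)=m(f)$ places $P_\beta f$ in $L^1_m$. So the only real work is the pointwise inequality $P_\beta f(x)\le a\,x^{-\alpha}m(f)$, uniformly in $\beta\in(0,\alpha]$ with a single constant $a$ depending only on $\alpha$.

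Using the formula for $P_\beta f$ recorded in the preceding lemma, with $T_\beta^{-1}(x)=\{y_1,y_2\}$, $y_1<y_2$, $\chi_\beta=3^\beta y_1^\beta/2^{1+\beta}$, and the identity $x=y_1(1+\chi_\beta)$, the inequality to prove takes the form
\begin{equation*}
  \frac{f(y_1)}{1+(1+\beta)\chi_\beta}+\frac{f(y_2)}{3} \le a\,y_1^{-\alpha}(1+\chi_\beta)^{-\alpha}\,m(f).
\end{equation*}
On the first summand I would apply the cone hypothesis $f(y_1)\le a\,y_1^{-\alpha}m(f)$; on the second I would use that $y_2\in[2/3,1]$, so the monotonicity of $f$ coming from $\mathcal{C}_1$ gives $f(y_2)\le f(2/3)$, and the elementary bound $m(f)\ge\int_0^{2/3}f\ge\tfrac{2}{3}f(2/3)$ then forces $f(y_2)\le\tfrac{3}{2}m(f)$. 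After rearranging, the whole task reduces to checking
\begin{equation*}
  \tfrac{1}{2}\le a\,y_1^{-\alpha}\!\left[(1+\chi_\beta)^{-\alpha}-\frac{1}{1+(1+\beta)\chi_\beta}\right], \qquad y_1\in(0,2/3],\ \beta\in(0,\alpha].
\end{equation*}

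The core estimate is a lower bound on the bracket. Combining the two terms into one fraction and invoking Bernoulli's inequality $(1+\chi_\beta)^\alpha\le 1+\alpha\chi_\beta$ (valid for $\alpha<1$), the numerator is at least $(1+\beta-\alpha)\chi_\beta\ge(1-\alpha)\chi_\beta$, while the denominator $(1+\chi_\beta)^\alpha[1+(1+\beta)\chi_\beta]$ is at most $3$ on the relevant range $\chi_\beta\in[0,1/2]$ (which covers $y_1\in[0,2/3]$). Plugging in $\chi_\beta\ge\tfrac{1}{2}y_1^\beta$ and using $y_1^{\beta-\alpha}\ge 1$ (valid because $\beta\le\alpha$ and $y_1\le 1$), one gets
\begin{equation*}
  y_1^{-\alpha}\!\left[(1+\chi_\beta)^{-\alpha}-\frac{1}{1+(1+\beta)\chi_\beta}\right]\ge \frac{1-\alpha}{6},
\end{equation*}
so any $a\ge 3/(1-\alpha)$ does the job.

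The delicate point, and the main obstacle, is the uniformity in $\beta$: the bracket vanishes as $\chi_\beta\to 0$, i.e., as $y_1\to 0$, and the rate depends on $\beta$. One needs both $1+\beta-\alpha$ bounded away from zero (which is where $\alpha<1$ enters) and the factor $y_1^{\beta-\alpha}$ to remain at least one as $y_1\to 0$ (which is exactly the restriction $\beta\le\alpha$). Both of these are built into the hypotheses, so the resulting constant $a$ depends only on $\alpha$, which is precisely what the intended sequential-composition application demands.
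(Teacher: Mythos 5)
Your proof is correct and follows essentially the same route as the paper's: both reduce to the two-branch formula for $P_\beta f$, control the neutral branch via Bernoulli's inequality $(1+\chi_\beta)^\alpha\le 1+\alpha\chi_\beta$ against the derivative $1+(1+\beta)\chi_\beta$, and invoke $\beta\le\alpha$ through $y_1^{\beta-\alpha}\ge 1$ (the paper's $(\tfrac32 y_1)^{\alpha-\beta}\le 1$). The only cosmetic difference is that you bound the expanding-branch term by $f(y_2)\le\tfrac32 m(f)$ using monotonicity alone, whereas the paper uses the increasing property of $x^{\alpha+1}f(x)$ to get $f(y_2)\le y_2^{-\alpha-1}m(f)$; both yield an admissible constant of order $(1-\alpha)^{-1}$.
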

   \begin{proof}
   Let us suppose that $\int_0^1 fdx=1;$ then we look for a constant $a$ for which $P_{\b}f(x)\le a x^{-\a}.$ Using the notations in the proof of the previous Lemma and remembering that $x^{\a+1}f(x)\le f(1)\le \int_0^1 fdx=1$,  we get $$ \begin{aligned}
   P_{\b}f(x)=\frac{f(y_1)}{T'_{\b}(y_1)}+\frac{f(y_2)}{T'_{\b}(y_2)} & \le
   \frac{ay_1^{-\a} }{T'_{\b}(y_1)}+\frac{y_2^{-\a-1}}{T'_{\b}(y_2)}  \\ &= \left\{\left(\frac{x}{y_1}\right)^{\a}\frac{1}{T'_{\b}(y_1)}+\frac1a \frac{x^{\a}}{y_2^{\a+1}T'_{\b}(y_2)}\right\}a x^{-\a}, \end{aligned}
   $$
   but
   $$ \begin{aligned}
   \left(\frac{x}{y_1}\right)^{\a}\frac{1}{T'_{\b}(y_1)}+\frac1a \frac{x^{\a}}{y_2^{\a+1}T'_{\b}(y_2)} & \le \frac{(1+\chi_{\b})^{\a}}{1+(1+\b)\chi_{\b}}+\frac1a (\frac32y_1)^{\a-\b}\chi_\b (1+\chi_\b)^{\a} \\ & \le
   \frac{(1+\chi_{\b})^{\a}}{1+(1+\b)\chi_{\b}}+\frac1a (\frac32)^{\a}\chi_{\b} , \ (*) \end{aligned}
   $$
   where the last step is justified by the fact that $\b\le \a$ and $0\le \chi_\b\le 1/2$. By taking the common denominator one gets
   $$
   (*)\le \frac{1+\{\b+[(\a-\b)+2^{\a}a^{-1}(\b+2)]\}\chi_{\b}}{1+(1+\b)\chi_{\b}}.
   $$
   We get the desired result if $(\a-\b)+2^{\a}a^{-1}(\b+2)\le 1$, which is satisfied whenever
   $$
   a\ge \frac{2^{\a}(2+ \a)}{1-\a}.
   $$\end{proof}

   \begin{Remark}\label{R1}

The preceding two lemmas imply the following properties which will be used later on.
\begin{enumerate}
\item $\forall f\in {\mathcal C}_2, \ \inf_{x\in [0,1]}f(x)=f(1)\ge \min\{a; \ [\frac{\a(1+\a)}{a^{\a}}]^{\frac{1}{1-\a}}\}m(f)$.
    \item \label{R1_item2} For any concatenation $P_{1}^m=P_m\circ \cdots \circ P_1$ we have $$P_{1}^m{\bf 1}(x)\ge \min\{a; \ [\frac{\a(1+\a)}{a^{\a}}]^{\frac{1}{1-\a}}\}.$$
 \end{enumerate}
 \end{Remark}

 See the proof of Lemma 2.4 in \cite{LSV} for the proof of  the first item, the second  follows immediately from the first.

 \begin{Remark}
 Using the previous Lemmas it is also possible to prove the existence of the  density in  ${\mathcal C}_2$ for the unique a.c.i.m. by using the same argument as in Lemma 2.3 in \cite{LSV}.
 \end{Remark}

We now take $f\in {\mathcal C}_2$ and define the {\em averaging operator} for $\eps>0$:
$$
      \mathbb{A}_{\eps}f(x):=\frac{1}{2\eps}\int_{B_{\eps}(x)} f dm
      $$
 where $B_r(x)$ denotes the ball of radius $r$ centered at the point $x\in S^1$, and  define a new {\em perturbed transfer operator} by
      $$
      \mathbb{P}_{\eps,m}:= P_{m}^{n_{\eps}}\mathbb{A}_{\eps}= P_{\b_{m+n_{\eps}-1}}\circ \cdots \circ P_{\b_m}\mathbb{A}_{\eps}
      $$
      where $n_{\eps}$ will be defined later on.
       It is very easy to see that
    \begin{Lemma}\label{DO}
      For $f\in {\mathcal C_2}$
      $$
      \|\mathbb{P}_{\eps,m}f-P_{m}^{n_{\eps}}f\|_1\le c \|f\|_1 \eps^{1-\a}
      $$
      where $c$ is  independent of $\b$.
      \end{Lemma}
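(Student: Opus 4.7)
The plan is to reduce the statement to a one-operator estimate on $\mathbb{A}_\eps$ and then exploit the two defining features of the cone $\mathcal{C}_2$: monotonicity and the singular bound $f(x)\le a\,m(f)\,x^{-\a}$.

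First, since $\mathbb{P}_{\eps,m}f-P_m^{n_\eps}f=P_m^{n_\eps}(\mathbb{A}_\eps f-f)$ and each $P_{\b_k}$ is a Perron--Frobenius operator (hence a weak $L^1$-contraction: $\|P_{\b_k}g\|_1\le\|g\|_1$ for all $g\in L^1_m$), the inequality $\|P_m^{n_\eps}(\mathbb{A}_\eps f-f)\|_1\le\|\mathbb{A}_\eps f-f\|_1$ holds. Thus both the length $n_\eps$ of the concatenation and the specific choice of exponents $\b_k$ drop out, giving a constant independent of $\b$, and it suffices to prove $\|\mathbb{A}_\eps f-f\|_1\le c\,m(f)\,\eps^{1-\a}$, since $\|f\|_1=m(f)$.

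Second, write $\mathbb{A}_\eps f(x)-f(x)=\frac{1}{2\eps}\int_{B_\eps(x)}(f(y)-f(x))\,dy$ and split $S^1$ into the ``bulk'' $[\eps,1-\eps]$ and the thin strip of width $\sim\eps$ around the singularity at $0\sim 1$. On the bulk, $B_\eps(x)$ does not wrap around and $f$ is monotonically decreasing there, so
\[
|\mathbb{A}_\eps f(x)-f(x)|\le \tfrac{1}{2}\bigl(f(x-\eps)-f(x+\eps)\bigr).
\]
Integrating and making the two obvious changes of variable collapses the result telescopically to $\tfrac12\bigl(\int_0^{2\eps}f-\int_{1-2\eps}^{1}f\bigr)\le \tfrac12\int_0^{2\eps}f$. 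The $\mathcal{C}_2$ bound $f(u)\le a\,m(f)\,u^{-\a}$ then yields
\[
\int_0^{2\eps}f(u)\,du\le \frac{a\,m(f)\,(2\eps)^{1-\a}}{1-\a},
\]
which gives the desired $O(\eps^{1-\a})m(f)$ contribution from the bulk.

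Third, in the boundary strips (where $B_\eps(x)$ may cross the singularity under the circle identification), I simply bound $|\mathbb{A}_\eps f-f|\le \mathbb{A}_\eps f+f$. The term $\int_{\text{strip}}f$ is again controlled by $\int_0^{C\eps}f$ plus a harmless integral over $[1-\eps,1]$ where $f\le a\,m(f)(1-\eps)^{-\a}$ is bounded; Fubini reduces $\int_{\text{strip}}\mathbb{A}_\eps f$ to the same type of integral on a slightly larger strip. All such contributions are again bounded by $C\,m(f)\,\eps^{1-\a}$ via the singular estimate. Summing the bulk and boundary contributions yields $\|\mathbb{A}_\eps f-f\|_1\le c\,m(f)\,\eps^{1-\a}$, as required.

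The only delicate point is handling the neighborhood of the fixed point: the monotonicity alone would even give an $O(\eps)$ rate, and the degradation to $\eps^{1-\a}$ comes entirely from the non-integrable singular bound $f(x)\lesssim x^{-\a}$ at $0$. The cone $\mathcal{C}_2$ is tailored precisely so that $\int_0^{2\eps}f\approx \eps^{1-\a}m(f)$, which is what fixes the exponent in the lemma.
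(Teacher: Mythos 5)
Your proof is correct and follows essentially the same route as the paper: both reduce the claim, via the $L^1$-contraction of the Perron--Frobenius operators (which is exactly what makes the bound independent of $n_\eps$ and of the $\b_k$), to the single estimate $\|\mathbb{A}_{\eps}f-f\|_1\le c\,m(f)\,\eps^{1-\a}$, obtained from monotonicity plus the cone bound $f(x)\le a\,m(f)\,x^{-\a}$. The only difference is that the paper simply cites Lemma 3.1 of \cite{LSV} for this last step, whereas you supply its (correct) details, including the wrap-around strip near $0\sim 1$ on the circle.
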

\begin{proof}
By linearity and contraction of the operators $P_{\b}$ we bound the left hand side  of the quantity in the statement of the lemma by $\int |\mathbb{A}_{\eps}f-f|dx$ and this quantity gives the prescribed bound as in Lemma 3.1 in \cite{LSV}.
\end{proof}

It is straightforward to get the following representation for the operator $ \mathbb{P}_{\eps, m}:$
$$
 \mathbb{P}_{\eps,m}f(x)=\int_0^1 K_{\eps, m}(x,z)f(z)dz
$$
 where
$$
 K_{\eps,m}(x,z):=\frac{1}{2\eps}P_{m}^{n_{\eps}}{\bf 1}_{B_{\eps}(z)}(x).
$$

We now observe that standard computations (see for instance Lemma 3.2 in \cite{LSV}), allows us to show that the preimages $a_n^{\a}:=T^{-n}_{\a,1}1$ verify $a_n^{\a} \approx \frac{1}{n^{\frac{1}{\a}}};$ here $T^{-1}_{\a,1}$ denotes the left pre-image of $T^{-1}_{\a}$, a notation which we will also use later on. Those points are the boundaries of a countable Markov partition and they will play a central role in the following computations; notice that the factors $c_1, c_2$ in the bounds $c_1\frac{1}{n^{\frac{1}{\a}}}\le a_n^{\a} \le c_2\frac{1}{n^{\frac{1}{\a}}}$ depend on $\a$ (and therefore on $\b$), but we will only use the $a_n$ associated to the exponent $\a$; in particular we will denote by  $c_{\a}$ the constant $c_2$ associated to $T_{\a}; $ the dependence on $\a$, although implicit, will not play any role in the following.

We will prove in the next section the following  important fact.

\begin{itemize}\item {\em Property ({\bf P})}. \  There exists $\gamma>0$ and $n_\epsilon = \mathcal{O}(\epsilon^{- \alpha})$ such that for all $\eps>0$, $x,z\in [0,1]$ and for any sequence $\b_m,\cdots,\b_{m+n_{\eps}-1}$, one has
$$
 K_{\eps, m}(x,z)\ge \gamma.
$$
\end{itemize}

 We now show how the positivity of the kernel implies the main result of this paper.

 \begin{Theorem}\label{thm.main}Suppose  $\psi, \phi$ are in ${\mathcal C}_2$ for some $a$ with equal expectation $\int \phi dm= \int \psi dm$. Then for any  $0<\a<1$ and for any  sequence $T_{\b_1},\cdots, T_{\b_n}$, $n>1$, of maps of Pomeau-Manneville type (\ref{PM}) with $0 < \b_k\le \a, \ k\in [1,n]$, we have
 $$
 \int |P_{\b_n}\circ\cdots\circ P_{\b_1}(\phi)-P_{\b_n}\circ\cdots\circ P_{\b_1}(\psi)|dm \le C_{\a} (\|\phi\|_1+\|\psi\|_1)n^{-\frac{1}{\a}+1}(\log n)^{\frac{1}{\a}},
 $$
 where the constant $C_{\a}$ depends only on the map $T_{\a}$, and $\|\cdot\|_1$ denotes the $L^1_m$ norm.

 A similar rate of decay holds  for  $C^1$ observables $\phi$ and $\psi$ on $S^1$; in this case the rate of decay has an upper bound given by
 $$
  C_{\a} \ \mathcal{F}(\|\phi\|_{C^1}+\|\psi\|_{C^1})n^{-\frac{1}{\a}+1}(\log n)^{\frac{1}{\a}}
 $$
 where the function $\mathcal{F}: \mathbb{R}\rightarrow \mathbb{R}$ is affine.
 \end{Theorem}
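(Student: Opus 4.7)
The plan is to run a Doeblin-style $L^1$-contraction block by block, using the uniform lower bound $K_{\eps,m}\ge \gamma$ from Property~(P) and controlling the replacement error $P^{n_\eps}_m - \mathbb{P}_{\eps,m}$ by Lemma~\ref{DO}. Write $n = K n_\eps + r$ with $r<n_\eps$, and set $\phi_j := P_{1}^{jn_\eps}\phi$, $\psi_j := P_{1}^{jn_\eps}\psi$, $E_j := \|\phi_j-\psi_j\|_1$. Cone invariance gives $\phi_j,\psi_j\in \mathcal{C}_2$, and since each $P_\beta$ and $\mathbb{A}_\eps$ preserve integrals on $S^1$, $\int\phi_j=\int\phi=\int\psi=\int\psi_j$.

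For the contraction step, put $f := \phi_j-\psi_j$ and split $f = f^+-f^-$. The zero-mean condition yields $\int f^+=\int f^-=\tfrac12\|f\|_1$, so Property~(P) and the integral representation of $\mathbb{P}_{\eps,m}$ give pointwise
\[
\mathbb{P}_{\eps,jn_\eps+1} f^\pm(x) \;=\; \int_0^1 K_{\eps,jn_\eps+1}(x,z)\,f^\pm(z)\,dz \;\ge\; \frac{\gamma}{2}\|f\|_1 .
\]
Both images integrate to $\tfrac12\|f\|_1$, so from $|u-v|\le u+v-2\min(u,v)$ we obtain $\|\mathbb{P}_{\eps,jn_\eps+1}f\|_1 \le (1-\gamma)\|f\|_1$. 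Combining this with Lemma~\ref{DO} (applied separately to $\phi_j$ and $\psi_j$, both in $\mathcal{C}_2$, and using $\|\phi_j\|_1=\|\phi\|_1$, $\|\psi_j\|_1=\|\psi\|_1$) gives the recursion
\[
E_{j+1} \;\le\; (1-\gamma)\, E_j \;+\; c\,\eps^{1-\alpha}(\|\phi\|_1+\|\psi\|_1),
\]
and iterating over the $K$ blocks (the trailing $r<n_\eps$ iterates are $L^1$-contractions and only help) yields
\[
\|P_1^n(\phi-\psi)\|_1 \;\le\; E_K \;\le\; (1-\gamma)^K(\|\phi\|_1+\|\psi\|_1) \;+\; \frac{c}{\gamma}\eps^{1-\alpha}(\|\phi\|_1+\|\psi\|_1).
\]

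To extract the rate, note $n_\eps \asymp \eps^{-\alpha}$, so $K \asymp n\eps^\alpha$. Setting $\eps^\alpha = A\log n / n$ with $A$ large enough makes $(1-\gamma)^K$ smaller than any fixed power of $n^{-1}$, so the $\eps^{1-\alpha}$-term dominates, giving a bound of order $(\log n/n)^{(1-\alpha)/\alpha}$ times $\|\phi\|_1+\|\psi\|_1$, i.e.\ the claimed $n^{1-1/\alpha}(\log n)^{1/\alpha}$ bound. For the $C^1$ part, given $\phi,\psi\in C^1(S^1)$ with $\int\phi=\int\psi$, take $h(x) := A x^{-\alpha}$ with $A$ a sufficiently large constant multiple of $\|\phi\|_{C^1}+\|\psi\|_{C^1}$; a direct check (using $A\alpha x^{-\alpha-1}\ge \|\phi'\|_\infty$ to force $(\phi+h)'<0$, and similar one-line estimates for the monotonicity of $X^{\alpha+1}(\phi+h)$ and the pointwise bound $\phi+h\le a x^{-\alpha}\int(\phi+h)$) shows $\phi+h,\psi+h\in \mathcal{C}_2$, and $h\in\mathcal{C}_2$. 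Since $P_1^n\phi-P_1^n\psi = P_1^n(\phi+h)-P_1^n(\psi+h)$, the first part applies with $\|\phi+h\|_1+\|\psi+h\|_1 \le \|\phi\|_1+\|\psi\|_1+2A/(1-\alpha)$, which is affine in $\|\phi\|_{C^1}+\|\psi\|_{C^1}$ as required.

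The real obstacle sits upstream: Property~(P), the uniform positivity of the kernel over all concatenations $P_m^{n_\eps}$ and all choices of $\beta$'s, is what must still be established, and it requires the non-stationary distortion estimates of the next section (this is the ``major difficulty'' anticipated in the introduction). Granted Property~(P), the argument above is a standard Doeblin-coupling-plus-perturbation iteration; the only delicacy intrinsic to the present proof is choosing $\eps$ so that the geometric contraction $(1-\gamma)^K$ and the averaging error $\eps^{1-\alpha}$ are simultaneously small, which is exactly what forces the $(\log n)^{1/\alpha}$ correction attached to $n^{1-1/\alpha}$.
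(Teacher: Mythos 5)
Your proof is correct and follows essentially the same route as the paper's: block decomposition $n=Kn_\eps+r$, replacement of each block $P^{n_\eps}_m$ by the averaged operator $\mathbb{P}_{\eps,m}$ at cost $O(\eps^{1-\a})$ via Lemma~\ref{DO}, a Doeblin-type $(1-\gamma)$ contraction per block coming from Property~({\bf P}), optimization of $\eps\sim(\log n/n)^{1/\a}$, and a shift of $C^1$ observables into $\mathcal{C}_2$ with affine dependence on the $C^1$ norm. The only (harmless) deviations are organizational: your one-step recursion damps the replacement errors geometrically instead of summing them over the $k$ blocks as in the paper's telescoping identity (which in fact yields the marginally sharper factor $(\log n)^{\frac{1}{\a}-1}$, still within the stated bound), and you shift by $Ax^{-\a}$ rather than the paper's affine function $\lambda x+\nu$.
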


\begin{Remark}One can ask what happens if we relax the assumption that all $\beta_n$ must lie in an interval $[0, \alpha]$ with $0 < \alpha < 1$. For instance, if the sequence $\beta_n$ satisfies $\beta_n < 1$ and $\beta_n \to 1$, does the quantity $\|P_1^n \phi - P_1^n \psi \|_1$ go to $0$ for all $\phi, \psi$ in $C^1$ with $\int \phi = \int \psi$? Similarly, what can we say when $\beta_n \to 0$? It follows from our main result that the decay rate of $\| P_1^n \phi - P_1^n \psi \|_1$ is superpolynomial, but can we get more precise estimates for particular sequences $\beta_n$, like $\beta_n = n^{- \theta}$ or $\beta_n = e^{-c n^\theta}$, $\theta > 0$?  We can also ask whether there is, in the case $\beta_n  \in [0, \alpha]$ covered by our result, an elementary proof for the decay to zero (without rate) of $\|P_1^n \phi - P_1^n \psi \|_1$.
\end{Remark}
\begin{proof}[Proof of Theorem~\ref{thm.main}]
We begin to  prove the first part of the theorem for $\mathcal{C}_2$ observables. We write $n=kn_{\eps}+m$ with $m < n_\epsilon$. We add and subtract  to the difference in the integral a term composed by the product of the first $m$ usual PF operators and the product of $k$ averaged operator $\mathbb{P}_{\eps}$, each composed by $n_{\eps}$ random PF operators;  precisely we use the notation introduced above to get:
 $$ \begin{aligned}
 (LM) & := \int |P_{\b_n}\circ\cdots\circ P_{\b_1}(\phi)-P_{\b_n}\circ\cdots\circ P_{\b_1}(\psi)|dm \\ & =
  \int |P_{1}^n(\phi)-\mathbb{P}_{\eps, m+1+(k-1)n_{\eps}} \circ \cdots \circ \mathbb{P}_{\eps, m+1}P_{1}^m(\phi) \\ &
 + \mathbb{P}_{\eps, m+1+(k-1)n_{\eps}} \circ \cdots \circ \mathbb{P}_{\eps, m+1}P_{1}^m(\phi)
  \\ & -\mathbb{P}_{\eps, m+1+(k-1)n_{\eps}} \circ \cdots \circ \mathbb{P}_{\eps, m+1}P_{1}^m(\psi)
 \\ &  +\mathbb{P}_{\eps, m+1+(k-1)n_{\eps}} \circ \cdots \circ \mathbb{P}_{\eps, m+1}P_{1}^m(\psi)-P_{1}^n(\psi)|dm. \end{aligned}
 $$
Thus
$$ \begin{aligned}
(LM) & \le \|P_{1}^n(\phi)-\mathbb{P}_{\eps, m+1+(k-1)n_{\eps}} \circ \cdots \circ \mathbb{P}_{\eps, m+1}P_{1}^m(\phi)\|_1 \\ & +
 \|P_{1}^n(\psi)-\mathbb{P}_{\eps, m+1+(k-1)n_{\eps}} \circ \cdots \circ \mathbb{P}_{\eps, m+1}P_{1}^m(\psi)\|_1 \\ &+
\|\mathbb{P}_{\eps, m+1+(k-1)n_{\eps}} \circ \cdots \circ \mathbb{P}_{\eps, m+1}P_{1}^m(\phi-\psi)\|_1. \end{aligned}
$$
We now treat the first term $I$ in $\phi$ on the right hand side ( the terms in $\psi$ being equivalent), and we consider the last term $III$ after that. We thus have:
$$
I= \|P_{m+1+(k-1)n_{\eps}}^{n_{\eps}}\cdots P_{ m+1}^{n_{\eps}}P_{1}^m(\phi)-
\mathbb{P}_{\eps,  m+1+(k-1)n_{\eps}} \circ \cdots \circ \mathbb{P}_{\eps, m+1}P_{1}^m(\phi)\|_1.
$$
To simplify the notations we put
  $$
    \begin{cases}
     \mathbb{R}_1:=\mathbb{P}_{\eps,m+1},\\
\vdots\\
  \mathbb{R}_k:=\mathbb{P}_{\eps, m+1+(k-1)n_{\eps}},
    \end{cases}
    $$
and
$$
    \begin{cases}
    \mathbb{Q}_1:=P_{m+1}^{n_{\eps}},\\
\vdots\\
  \mathbb{Q}_k:=P_{m+1+(k-1)n_{\eps}}^{n_{\eps}},
    \end{cases}
    $$
which reduce the above inequality to
$$
I= \| (\mathbb{Q}_k\cdots \mathbb{Q}_1-\mathbb{R}_k\cdots \mathbb{R}_1)P_{1}^m(\phi)\|_1.
$$
By induction we can easily see that
$$\mathbb{R}_k\cdots \mathbb{R}_1 -
\mathbb{Q}_k\cdots \mathbb{Q}_1=\sum_{j=1}^k\prod_{l=0}^{k-j-1}\mathbb{R}_{k-l}(\mathbb{R}_j-\mathbb{Q}_j)\prod_{l=0}^{j-1}\mathbb{Q}_{j-l-1}
$$
with $\mathbb{R}_{-1}={\bf 1}$ and $\mathbb{Q}_0={\bf 1}$; by setting $\phi_m:=P_{1}^m(\phi)$ and $\tilde{\phi}_m=P_1^m(\phi-\psi)$, we have therefore to bound by the quantity
$$
\sum_{j=1}^k\|\prod_{l=0}^{k-j-1}\mathbb{R}_{k-l}(\mathbb{R}_j-\mathbb{Q}_j)\prod_{l=0}^{j-1}\mathbb{Q}_{j-l-1}\phi_m\|_1.
$$
We now observe that $\mathbb{Q}_{j-l-1}\phi_m\in {\mathcal C_2}$; moreover $\|\mathbb{R}_{m}g\|_1\le \|g\|_1$\ $\forall g\in {\mathcal C_2}, \ 1\le m\le k$, since $\mathbb{R}_m$ is a concatenation of transfer operators and the averaging map $\mathbb{A}_{\eps}$ which are all contractions on $L^1$.
Then we finally get, by invoking also Lemma \ref{DO},
$$ \begin{aligned}
I & \le \| \mathbb{Q}_k\cdots \mathbb{Q}_1\phi_m-\mathbb{R}_k\cdots \mathbb{R}_1\phi_m\|_1 \\ & \le \sum_{j=1}^k c \|\phi_m\|_1\eps^{1-\a}\le c k \|\phi\|_1\eps^{1-\a}. \end{aligned}
$$
We now look at the third term $III$ which could be written as, by using the simplified notations introduced above: $III=\|\mathbb{R}_k\cdots \mathbb{R}_1\tilde{\phi}_m\|_1.$ By using Property ({\bf P}) and by applying the same arguments as in the footnote $6$ in \cite{LSV}, one gets
$$
\|\mathbb{R}_k\cdots \mathbb{R}_1\tilde{\phi}_m\|_1  \le e^{-\gamma k}\|\phi-\psi\|_1.
$$
In conclusion we get
$$\begin{aligned}
(LM)& \le c k \eps^{1-\a}(\|\phi\|_1+\|\psi\|_1)+e^{-\gamma k}(\|\phi\|_1+\|\psi\|_1) \\ & \le
\left(c \frac{n}{n_{\eps}}\eps^{1-\a}+e^{\gamma}\ e^{-\gamma \frac{n}{n_{\eps}}} \right) (\|\phi\|_1+\|\psi\|_1)\le C_{\a} \ (\|\phi\|_1+\|\psi\|_1) n^{1-\frac{1}{\a}} (\log n)^{\frac{1}{\a}} \end{aligned}
$$
having chosen  $\eps= n^{-\frac{1}{\a}}\left(\log n^{(\frac{1}{\a}-1) \kappa}\right)^{\frac{1}{\a}}$, for a conveniently chosen $\kappa$.

In order to prove the second part of the theorem for $C^1$ observables, we invoke the same argument as at the end of the proof of Theorem 4.1 in \cite{LSV}.
We notice in fact that if $\psi \in C^1$ then we can choose $\lambda, \nu\in \mathbb{R}$ such that $\psi_{\lambda, \nu}(x)=\psi + \lambda x+ \nu\in \mathcal{C}_2,$ the dependence of the parameters with respect to the $C^1$ norm being affine.

For instance $\lambda$ and $\nu$ could be chosen in such a way to verify the following constraints:  $\lambda<-\|\psi'\|_{\infty}$; $\nu>\max\{ \frac{(1+\a)\|\psi\|_{\infty}+\|\psi'\|_{\infty}-\lambda(2+\a)}{1+\a}, \frac{1+a}{a-1}\|\psi\|_{\infty}-\frac{a\lambda}{2(a-1)}\}.$
\end{proof}

\section{Distortion: Proof of Property ({\bf P})}

The main technical problem is now to check the positivity of the kernel; we will follow closely the strategy of the proof of Proposition 3.3 in \cite{LSV}. We recall that
$$
2\eps \ K_{\eps,  m}(x, z )= P_{m}^{n_{\eps}}{\bf 1}_{J}(x)
$$
where $J=B_{\eps}(z)$ is an interval which we will take later on as a ball of radius $\eps$ around $z$.


 By iterating  we get (we denote with $T^{-1}_{l,k}, \ k=1,2,$ the two inverse branches of $T_l$):
$$ \begin{aligned}
2\eps \ K_{\eps,  m} & = \sum_{l_{n_{\eps}}}\cdots \sum_{l_{1}}\frac{{\bf 1}_J(T^{-1}_{1,l_1}\cdots T^{-1}_{n_{\eps},l_{n_{\eps}} }x)}{|T_1'(T^{-1}_{1,l_1}\cdots T^{-1}_{n_{\eps},l_{n_{\eps}} }x) T_2'(T^{-1}_{2,l_2}\cdots T^{-1}_{n_{\eps},l_{n_{\eps}} }x)\cdots T'_{n_{\eps}}(T^{-1}_{n_{\eps},l_{n_{\eps}}}x)|} \\ &= \sum_{l_{n_{\eps}}}\cdots \sum_{l_{1}}\frac{{\bf 1}_J(x_{n_{\eps}})}{|T'_1(x_{n_{\eps}}) T'_2(T_1x_{n_{\eps}})\cdots T'_{n_{\eps}}(T_{n_{\eps}-1}\cdots T_1 x_{n_{\eps}})|} \end{aligned}
$$
where $x_{n_{\eps}}=T^{-1}_{1,l_1}\cdots T^{-1}_{n_{\eps},l_{n_{\eps}} }x$ ranges over all points in the preimage of  $x\in T_{n_{\eps}}\circ \cdots \circ T_1 J.$
The quantity on the right hand side is bounded from below by
$$
2\eps \ K_{\eps,  m}\ge {\bf 1}_{T_{n_{\eps}}\circ \cdots \circ T_1(J)}(x)\ \inf_{z\in J}\frac{1}{|T'_1(z) T'_2(T_1z)\cdots T'_{n_{\eps}}(T_{n_{\eps}-1}\cdots T_1 z)|}.
$$

We have therefore to control the ratio

$$\inf_{z\in J}\frac{1}{|T'_1(z) T'_2(T_1z)\cdots T'_{m}(T_{m-1}\cdots T_1 z)|}$$
where $m$ is the time needed for an interval $J$ of length greater than $2 \epsilon$ to cover all the circle. We proceed as in the proof of Proposition 3.3 in \cite{LSV}.

We need to introduce first some notations. Recall that $a_n^{\alpha}$ is the sequence of the preimages of $1$ by the left branch of $T_\alpha$. We use similarly $a_n^{\beta}$ for $T_\beta$ and define $a_n^0$ as the infimum over all $\beta >0$ of $a_n^{\beta}$. Remark that $a_n^0$ is the sequence of the preimages of $1$ by the left branch of the map $T_0$ defined by
\begin{equation}
   T_{0}(x)= \begin{cases}
     \frac{3x}{2}, \ 0\le x \le 2/3\\
     3x-2, \ 2/3 \le x \le 1.
   \end{cases}
 \end{equation}

For $k \ge 1$, we define the sequence $a_n^k$ so that $a_0^k = 1$ and $a_n^k$ is the preimage of $1$ by $T_{k+1}^n$ the most at the left. In particular, $a_n^k$ is the preimage of $a_{n-1}^{k+1}$ by the left branch of $T_{k+1}$. Remark that $a_n^k$ is a decreasing sequence in $n$ and that $a_n^{0} \le a_n^k \le a_n^{\alpha}$.

We define the intervals $I_n^k = [a_{n+1}^k, a_n^k]$, which satisfy $T_{k+1}^n I_n^k = [\frac{2}{3}, 1]$. We also define $I_{n,+}^k = I_{n+1}^k \cup I_n^k = [a_{n+2}^k, a_n^k]$.

We define the intermittent region $I = [0, a_2^{0}]$ and the hyperbolic region $H = [a_2^{0}, 1]$.

Let $J$ be an interval of length $2 \epsilon$. We will iterate $J$ under the non-stationary dynamics until it covers the whole space, and will control the distortion in the meantime.

At time $k$, the iterate $K = T_1^k J$ verifies one of the following condition

\begin{enumerate}

\item \label{case:hyperb} $K \cap I = \emptyset$;

\item \label{case:inter_1} $K \cap I \neq \emptyset$, and $K$ contains at most one $a_{\ell}^k$, $\ell > 2$;

\item \label{case:inter_2} $K \cap I \neq \emptyset$, and $K$ contains more than one $a_{\ell}^k$, $\ell > 2$.

\end{enumerate}
~ \\

{\bf Case \ref{case:hyperb}.} Suppose we are in the situation \ref{case:hyperb}. Either one of the iterates of $K$ will cross the point $\frac 2 3$ where the maps are not differentiable, or it will fall in the situation \ref{case:inter_1} or \ref{case:inter_2}. Let $n \ge 1$ be the time spent before one of these situations occurs.

Since all maps are uniformly expanding on the hyperbolic region with uniformly bounded second derivatives, by standard computations, we have for all $a,b \in K$ : $$\frac{(T_{k+1}^n)'(a)}{(T_{k+1}^n)'(b)} \le \exp \left\{ \sum_{\ell=0}^{n - 1} \frac{ \sup_{\xi} |T_{k+n - \ell}'' \xi |}{\inf_\xi |T_{k+n - \ell}' \xi|} |T_{k+1}^{n+k -\ell - 1} a - T_{k+1}^{n+k -\ell - 1} b| \right\}.$$

Since $0 < \beta \le \alpha < 1$, the ratio $\frac{|T_\beta^{''} x|}{|T_\beta^{'} x|}$ and the quantity $|T_\beta^{'} x|$ are bounded from above uniformly in $\beta$ and $x \in H$ respectively by $D > 0$ and $0 < r < 1$. We then have

$$\frac{(T_{k+1}^n)'(a)}{(T_{k+1}^n)'(b)} \le \exp \left\{ c_1 | T_{k+1}^n(K) |\right\},$$
where $c_1 = \frac{D}{1-r}$ depends only on $\alpha$. After integration with respect to $b$, we find $|(T_{k+1}^n)'(a)| \le \frac{| T_{k+1}^n(K) |}{|K|}  \exp \left\{c_1 |T_{k+1}^n(K)|\right\}$, from which we deduce $$P_{k+1}^n {\bf 1}_K \ge {\bf 1}_{T_{k+1}^n(K)} \frac{|K|}{|T_{k+1}^n(K)|} \exp \left\{-c_1 |T_{k+1}^n(K)|\right\}.$$

If this new iterate of $K$ intersects the intermittent region, we consider the situation \ref{case:inter_1} or \ref{case:inter_2}, and continue the algorithm. If it is still in the hyperbolic region, but now contains the point $\frac{2}{3}$, we proceed in the following way. Let us call $L = T_{k+1}^n(K)$ the new iterate, and $L_l$ and $L_r$ the parts of the interval at the left and the right respectively of $\frac 2 3$. Either $| L_l | > \frac{1}{3} |L|$ or $| L_r | > \frac{1}{3} |L|$.

In the first case, after one iteration, the image of $L_l$ will be contained in $[\frac{2}{3}, 1]$, with the right extremity at 1. So after say $m$ steps, it will cover the whole unit interval, and the distortion is well controlled during this iteration. We then have $$\begin{aligned} P_{k+n+1}^{m+1} {\bf 1}_L & \ge P_{k+n+1}^{m+1} {\bf 1}_{L_l} \\ &\ge {\bf 1}_{T_{k+n+1}^{m+1}(L_l)} \frac{|L_l|}{|T_{k+n+1}^{m+1}(L_l)|} \exp \left\{- c_1 | T_{k+n+1}^{m+1}(L_l)| \right\} \\ & \ge \frac{1}{3}  |L| \exp\left\{ - c_1 \right\}.\end{aligned} $$

Setting $n_1 = n+m+1$, we thus have $$ \begin{aligned} P_{k+1}^{n_1} {\bf 1}_K = P_{k+n+1}^{m+1} P_{k+1}^{n} {\bf 1}_K & \ge P_{k+n+1}^{m+1} {\bf 1}_L \frac{|K|}{|L|} \exp \left\{ -c_1 |L| \right\}\\ & \ge \frac{1}{3} |K| \exp\left\{ - c_1 - c_1 |L|\right\}. \end{aligned}$$

In the second case, if the right part is longer than the left part, after one iteration, the iterate of the right part will be of the form $[0,x]$, and we fall in the case \ref{case:inter_2} of the algorithm. We can apply the control on the distortion given in the case \ref{case:inter_2} to $L_r$. Like in the previous case, doing this, we will get a factor $\frac 1 3$, but as we will see, the case \ref{case:inter_2} leads to the end of the algorithm, so we will meet the discontinuity point $\frac 2 3$ at most one time during the whole procedure. Hence the factor $\frac 1 3$ will appear only one time, and will not multiply itself several times, which could have spoiled the estimate.

~\\
{\bf Case \ref{case:inter_1}.} $K$ is included in an interval $I_{\ell,+}^k$. Since $T_{k+1}^{\ell}(I_{\ell,+}^k) = [a_2^{k+\ell}, 1 ] \subset [a_2^{0},1]$, after exactly $\ell$ iterations, the image of $K$ will be included in the hyperbolic region, and we continue with the case \ref{case:hyperb}. During this period of time, the distortion is controlled using the Koebe principle, that we recall below :

\begin{Lemma}[{Koebe Principle \cite[Theorem IV.1.2]{DeMelo}}] For all $\tau > 0$, there exists $C = C(\tau) > 0$ such that for all increasing diffeomorphism $g$ of class $C^3$ with a non-positive Schwarzian derivative \footnote{i.e.  $\frac{g'''(x)}{g'(x)} - \frac 3 2 \left( \frac{g''(x)}{g'(x)} \right)^2 \le 0$},
for all subintervals $J_1 \subset J_2$ such that $g(J_2)$ contains a $\tau$-scaled neighborhood of $g(J_1)$  \footnote{i.e. the intervals on the left and on the right of $g(J_1)$ in $g(J_2)$ have length at least $\tau |g(J_1)|$}, one has $$\frac{g'(x)}{g'(y)} \le \exp \left\{ C \frac{|g(x) - g(y)|}{|g(J_1)|}\right\} \text{ for all } x,y \in J_1.$$
\end{Lemma}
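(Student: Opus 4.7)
The plan is to prove the Koebe Principle by the standard route through cross-ratio monotonicity under maps with non-positive Schwarzian derivative. Writing $J_2 = L \sqcup J_1 \sqcup R$, the hypothesis becomes $|g(L)|, |g(R)| \ge \tau |g(J_1)|$. I would work with the cross-ratio
$$D(J_2, J_1) := \frac{|J_1|\,|J_2|}{|L \cup J_1|\,|J_1 \cup R|},$$
and establish as the main technical step that, for any increasing $C^3$ diffeomorphism $g$ with $Sg \le 0$ and any such nested pair, one has $D(g(J_2), g(J_1)) \ge D(J_2, J_1)$. The cleanest route to this monotonicity is through the identity $\eta''/\eta = -\tfrac{1}{2}Sg$, where $\eta := (g')^{-1/2}$, which shows $Sg \le 0$ is equivalent to convexity of $\eta$; convexity of $\eta$ then translates into the cross-ratio inequality after a Jensen-type manipulation of the integral $|g(I)| = \int_I g'(s)\,ds$. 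A useful companion observation is that $Sg \le 0$ is preserved under composition (via the Schwarzian chain rule), so the inequality is stable under approximations and further iteration.

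Armed with the cross-ratio monotonicity, I would pass to the pointwise derivative bound as follows. Fix $x, y \in J_1$ with $x < y$, and apply the cross-ratio inequality to an infinitesimal interval $[x, x+h]$ inside $J_2$; letting $h \to 0$ turns the cross-ratio into the ratio $g'(x)\,|L|/|g(L)|$ divided by analogous ratios involving the lengths between $x$ and the endpoints of $J_2$. Doing the symmetric step at $y$ and taking the difference yields an infinitesimal inequality for $(\log g')'(x) - (\log g')'(y)$ controlled by the $g$-distance between $x$ and $y$, divided by $|g(J_1)|$, with the $\tau$-shoulder hypothesis producing the lower bound on the side lengths of $g(J_2)$ that fixes the proportionality constant. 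Integrating this infinitesimal bound along $J_1$ and exponentiating produces the claimed estimate with a constant $C(\tau)$ depending only on $\tau$.

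The main obstacle I expect is the bookkeeping in the passage from cross-ratio monotonicity to the pointwise distortion bound, and in tracking precisely how the constant $C(\tau)$ depends on $\tau$; the estimate typically deteriorates as $\tau \to 0$ (like $1/\tau$ or worse), and one must verify that the shoulder lengths $|g(L)|,|g(R)| \ge \tau |g(J_1)|$ really survive as lower bounds throughout the infinitesimal chain of cross-ratios. A secondary subtlety is making sure the scheme does not depend on smoothness assumptions beyond $C^3$ and the sign of the Schwarzian. Once these are under control, the argument is classical and essentially reorganizes Koebe's original complex-analytic distortion estimate into its real-variable analogue, where the Poincaré/hyperbolic geometry of intervals is replaced by the elementary cross-ratio $D$.
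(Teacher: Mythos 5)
The paper does not actually prove this lemma; it is quoted verbatim from de Melo--van Strien \cite[Theorem IV.1.2]{DeMelo}, and your outline is precisely the argument given there: non-positive Schwarzian derivative (equivalently, convexity of $(g')^{-1/2}$, via the identity $\eta''=-\tfrac12 \eta\, Sg$) yields monotonicity of the cross-ratio under $g$, and the distortion bound with a constant $C(\tau)$ depending only on the Koebe space follows by applying that inequality to infinitesimal subintervals and integrating the resulting bound on $(\log g')'$ over $J_1$. Your plan is therefore correct and takes the same (standard) route as the cited source; the two steps you defer — the cross-ratio expansion and the infinitesimal-to-integrated passage — are exactly the content of the reference and go through as you describe.
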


We apply it to $g$ defined as the composition of the analytic extensions to $(0, +\infty)$ of the left branches of $T_{k+\ell}, \ldots, T_{k+1}$ with $J_1 = I_{\ell,+}^k$ and $J_2 = [\delta, 2]$, where $\delta = \delta(k,\ell)$ is chosen small enough so that $\delta < a_{\ell+2}^k$ and $T_{0}^\ell \delta < \frac 1 2 a_2^{0}$. $g$ has non-positive Schwarzian derivative since it is a composition of maps that have non-positive Schwarzian derivatives.

We have $g(J_1) = [a_2^{k+\ell}, 1 ] \subset [a_2^{0},1]$ and $g(J_2) = [T_{k+1}^{\ell} \delta, g(2)] \supset [T_{0}^\ell \delta, 2] \supset [\frac{1}{2} a_2^{0}, 2]$.

Set $\tau = \min \left\{ \frac{a_2^{0}}{2 (1 - a_2^{0})}, \frac{1}{1 - a_2^{0}} \right\}$, which does not depends on the composition of maps, nor the number of steps $\ell$.

The interval at the left of $g(J_1)$ in $g(J_2)$ contains $[\frac{1}{2 } a_2^{0}, a_2^{0}]$, and thus has length longer than $\frac 1 2 a_2^{0} \ge \tau ( 1 - a_2^{0}) \ge \tau | g(J_1)|$. Similarly, the interval at the right of $g(J_1)$ in $g(J_2)$ contains $[1,2]$ and thus has length longer than $1 \ge \tau ( 1 - a_2^{0}) \ge \tau |g(J_1) |$.
We have proved that $g(J_2)$ contains a $\tau$-scaled neighborhood of $g(J_1)$, so the Koebe principle implies there exists $C = C(\tau)$ such that for all $a, b \in J_1$ one has  $$\frac{(T_{k+1}^{\ell})'(a)}{(T_{k+1}^{\ell})'(b)} \le \exp \left\{ C \frac{|T_{k+1}^{\ell}(a) - T_{k+1}^{\ell}(b)|}{|T_{k+1}^{\ell}(J_1)|} \right\} \le \exp \left\{ c_2 |T_{k+1}^{\ell}(a) - T_{k+1}^{\ell}(b)| \right\},$$ with $c_2 = 3 C(\tau)$ since $T_{k+1}^{k+\ell}(J_1) \supset [\frac 2 3, 1]$.

As $K \subset J_1$, one has for all $a,b  \in K$ $$\frac{(T_{k+1}^{\ell})'(a)}{(T_{k+1}^{\ell})'(b)} \le \exp \left\{ c_2 | T_{k+1}^{\ell}(K) |\right\},$$ which implies $$P_{k+1}^{\ell} {\bf 1}_K \ge {\bf 1}_{T_{k+1}^{\ell}(K)} \frac{|K|}{| T_{k+1}^{\ell} (K)|} \exp \left\{ -c_2 | T_{k+1}^{\ell} (K)| \right\}.$$
~ \\
{\bf Case \ref{case:inter_2}.} If more than one third of $K$ is in $[\frac 2 3, 1]$ and is of the form $[a,1]$, then we consider $K \cap [\frac 2 3, 1]$ and case \ref{case:hyperb} will hold until we cover the whole interval, and we lose a factor $\frac 1 3$ (only one time). Otherwise, we define $\ell_-$ as the least integer such that $I_{\ell_-}^k$ is included in $K$.
We consider two sub-cases according to whether the part of $K$ at the right of $a_{\ell_-}^k$ is of length at least $\frac{|K|}{3}$ or not.

In the first sub-case, we set $K' = K \cap [a_{\ell_- +1}^k, 1]$, which satisfies $|K'| \ge \frac{|K|}{3}$. Since $K' \subset I_{\ell_- -1, +}^k$ and $T_{k+1}^{\ell_- -1}(K') \supset I_1^{k+\ell_- -1}$, we have by the step \ref{case:inter_1}:  $$P_{k+1}^{\ell_- -1} {\bf 1}_K \ge \frac{1}{3} {\bf 1}_{I_1^{k+\ell_- -1}} |K| e^{-c_2}.$$

Since $T_{k+\ell_-}^2 I_1^{k+\ell_- -1} = [0,1]$, and $|T_\beta ' (x) |$ is bounded from above uniformly in $\beta$ and $x$ by some constant $M>0$, we find $$P_{k+1}^{\ell_- +1} {\bf 1}_K \ge \frac{1}{3M^2} |K| e^{-c_2}.$$

In the second sub-case, we choose $K'$ in such a way that $|K'| \ge \frac{|K|}{3}$, the right extremity of $K'$ is $a_{\ell_-}^k$ and the left extremity is to the right of $0$. We cut $K'$ into pieces $I_{\ell_-}^k, \ldots, I_{\ell_+}^k$ such that their union is of length longer than $\frac{|K'|}{3} \ge \frac{|K|}{9}$, with $\ell_+$ minimal. This choice to cut $K'$ rather than $K$ allows us to estimate $\ell_+$: indeed, if we set $K=[a, a_{\ell_-}^k]$, since $\ell_+$ is minimal, the length of $[a, a_{\ell_+ -1}^k]$ is at least $\frac{2 |K'|}{3} \ge \frac{2 |K|}{9}$. Hence, $\frac{C}{(\ell_+ -1)^{\frac 1 \alpha}} \ge a_{\ell_+ -1}^k \ge a_{\ell_+ -1}^k - a \ge \frac{2|K|}{9}$ and consequently $\ell_+ = \mathcal{O}(|K|^{- \alpha})$.

By the computation done for the case \ref{case:inter_1}, we have

$$\begin{aligned} P_{k+1}^{\ell_++1} {\bf 1}_K &\ge \sum_{\ell=\ell_-}^{\ell_+} P_{k+1}^{\ell_++1} {\bf 1}_{I_{\ell}^k}= \sum_{\ell=\ell_-}^{\ell_+} P_{k+\ell+1}^{\ell_+ - \ell +1} P_{k+1}^{\ell} {\bf 1}_{I_{\ell}^k} \\ & \ge \sum_{\ell=\ell_-}^{\ell_+} P_{k+\ell+1}^{\ell_+ - \ell +1} {\bf 1}_{T_{k+1}^{\ell}(I_{\ell}^k)} \frac{|I_{\ell}^k|}{|T_{k+1}^{\ell}(I_{\ell}^k)|} \exp \left\{ -c_2 |T_{k+1}^{\ell}(I_{\ell}^k)| \right\}. \end{aligned} $$

Since $T_{k+1}^{\ell}(I_{\ell}^k) =  [\frac 2 3, 1]$, which is sent after one iteration onto the whole interval, we have thanks to Remark \ref{R1} item \ref{R1_item2} $$ \begin{aligned} P_{k+1}^{\ell_++1} {\bf 1}_K \ge \sum_{\ell=\ell_-}^{\ell_+} P_{k+\ell+1}^{\ell_+ -\ell+1} {\bf 1}_{[\frac 2 3, 1]} \frac{|I_{\ell}^k|}{1/3} \exp \left\{ - \frac{c_2}{3} \right\}  & \ge \frac{c_3}{3} \exp\left\{- \frac{c_2}{3}\right\} \sum_{\ell=\ell_-}^{\ell_+} |I_{\ell}^k| \\ &\ge \frac{c_3}{27} \exp\left\{- \frac{c_2}{3}\right\} |K|, \end{aligned}$$ with $c_3$ the constant given in Remark \ref{R1}, since $P_{k+ \ell+1}^{\ell_+-\ell+1} {\bf 1}_{[\frac{2}{3}, 1]} \ge \frac{1}{3} P_{k+\ell+2}^{\ell_+-\ell} {\bf 1} \ge \frac{c_3}{3}$.

 ~\\
{\bf Conclusion.} Let $J$ be an interval of length at least $2 \epsilon$. We associate to $J$ a sequence of integers $n_1, m_1, n_2, m_2,  \ldots, n_p$ such that for $n_1$ steps the iterates of $J$ is in the hyperbolic region (with possibly $n_1 = 0$), then for $m_1$ steps, it is in situation of the case \ref{case:inter_1}, then it is again for $n_2$ steps in the hyperbolic region (recall that from the case \ref{case:inter_1}, we can only fall into the case \ref{case:hyperb}), and so on, until one iterate of $J$ crosses the singularity $\frac{2}{3}$, or case \ref{case:inter_2} happens. These two situations lead to the end of the algorithm. We will only consider the situation where case \ref{case:inter_2} happens, and when the part of $K$ to the right of $a_{l_-}^l$ has length not more than $\frac{|K|}{3}$, the others being similar.

For $n \ge n_1 + m_1 + \ldots + n_p  + \ell_+ + 1$, we have

$$ \begin{aligned}
&P_1^n {\bf 1}_J  \ge P_{n_1 + \ldots + \ell_+ + 2}^{n - n_1 - \ldots - \ell_+ -1} P_{n_1 + m_1 + \ldots + n_p +1}^ { \ell_+ + 1} \ldots P_{n_1+1}^{m_1} P_1^{n_1} {\bf 1}_J& \\
&\ge P_{n_1 + \ldots + \ell_+ + 2}^{n - n_1 - \ldots - \ell_+ -1} P_{n_1 + m_1 + \ldots + n_p +1}^ { \ell_+ + 1}  \ldots P_{n_1+1}^{ m_1} {\bf 1}_{T_1^{n_1}(J)} \frac{|J|}{|T_1^{n_1}(J)|}e^{ - c_1 |T_1^{n_1}(J)|}& \\
& \ge P_{n_1 + \ldots + \ell_+ + 2}^{n - n_1 - \ldots -  \ell_+ -1} P_{n_1 + m_1 + \ldots + n_p +1}^ {n_1+ \ldots + n_p + \ell_+ + 1}  \ldots  {\bf 1}_{T_1^{n_1+m_1}(J)}  \frac{|T_1^{n_1}(J)|}{|T_1^{n_1+m_1}(J)|} \frac{|J|}{|T_1^{n_1}(J)|} &
\\ & \times e^{ - c_1 |T_1^{n_1}(J)|   - c_2 |T_1^{n_1+m_1}(J)|}& \\
& \ge ... &\\
& \ge (P_{n_1+\ldots + \ell_+ +2}^{n - n_1 - \ldots - \ell_+ -1} {\bf 1}) \frac{c_3}{27} |T_1^{n_1+ \ldots +n_p} (J)| \frac{|T_1^{n_1+ \ldots + m_{p-1}} (J)|}{|T_1^{n_1+ \ldots +n_p} (J)|} \ldots   \frac{|T_1^{n_1}(J)|}{|T_1^{n_1+m_1}(J)|} \frac{|J|}{|T_1^{n_1}(J)|}&
\\& \times e^{ - c_1 |T_1^{n_1}(J)|  - c_2 |T_1^{n_1+m_1}(J)| - \ldots - c_1 |T_1^{n_1+ \ldots + n_p }(J)| - \frac{c_2}{3}  }&
\\ & \ge (P_{n_1+\ldots + \ell_+ +2 }^{n - n_1 - \ldots - \ell_+ -1} {\bf 1}) \frac{c_3}{27} |J| e^{- 2(\frac{c_2}{3} + c_1) (r^{n_2+ \ldots + n_p} + \ldots + r^{n_2})} \ge \frac{c_3^2}{27} e^{- \frac{2(\frac{c_3}{3} + c_1) r}{1 -r}} |J| =: \gamma |J|.
\end{aligned}
$$
~\\

One has to estimate the supremum over all possible values of $t = n_1+ m_1+ \ldots +n_p + l_+$ and shows it is of order $\epsilon^{- \alpha}$. Let $n_\epsilon'$ the minimal time needed for an interval of length at least $2 \epsilon$ to cover all the circle. We claim that $n_\epsilon ' = \mathcal{O}(\epsilon^{-\alpha})$, which concludes the proof since $n_1 + m_ + \ldots + n_p \le n_\epsilon '$, as after these iterations, $J$ has not covered the circle, and $l_+ = \mathcal{O}(\epsilon^{- \alpha})$, as we showed previously.

It remains to prove the claim. Since the first derivatives of all the $T_{\b}$ is strictly increasing on the circle, the minimal time associated to intervals $J$ of size $2\eps$, will be attained when an iterate of $J$ will be located around $0$, then moving  according to  case \ref{case:inter_2}. We first consider an iterate whose length is one third of that  of $J$ (see above),   located in  $(0, 2\eps/3)$: we call this {\em situation F}.  This implies $a_{d+t}^1 \le 2\eps/3$ which in turn shows the time needed to cover the circle is $n_\epsilon '' = [\frac{3c_{\a}}{2\eps}]^{\a}.$ Take now $J$ far from $0$; if in $n_{\eps}''$ steps it will not meet the point $2/3$, it will cover the circle, since the derivatives will be continuous along the path. Otherwise if it will meet $2/3$ in a number of steps $<n_{\eps} ''$,  the worst successive situation is to be  sent in $0$ in the situation F. In conclusion, the minimal time associated to intervals $J$ of size $2\eps$ will be bounded from above by $2n_{\eps}''.$

\section*{Acknowledgments}
RA and SV were supported by the ANR-
Project {\em Perturbations} and by
the PICS ( Projet International de Coop\'eration Scientifique), {\em Propri\'et\'es statistiques des syst\`emes dynamiques d\'eterministes et al\'eatoires}, with the University of Houston, n. PICS05968. RA was supported by Conseil G\'en\'eral Provence-Alpes-C\^ote d'Azur. SV thanks the University of Houston for supporting his visits during the preparation  of this work and the Newton Institute in Cambridge where this paper was completed during the Workshop {\em Mathematics for Planet Earth}. MN was supported by NSF grant DMS 1101315.  AT was partially supported by the Simons Foundation grant 239583. The authors thank the referee for the useful comments. We are extremely grateful to S\'ebastien Gou\"{e}zel who pointed out a gap in the distortion argument of a preliminary version and suggested how to correct it by using the Koebe principle.

\section*{Note added in proof}
A more careful analysis shows that in the proof of Property~({\bf P}) the monotonicity of the derivatives is not necessary to estimate $n_\epsilon$. Thus, Theorem 2.6 holds for more general maps than (\ref{PM}), e.g. those in \cite{LSV}; the details can be found in \cite{aimino_thesis}.


\begin{thebibliography}{99}

\bibitem{aimino_thesis} R. Aimino, Vitesse de m\'elange et th\'eor\`emes limites pour les syst\`emes dynamiques al\'eatoires et non-autonomes, Ph. D. Thesis, Universit\'e de Toulon, (2014)

\bibitem{AR} R. Aimino, J. Rousseau, Concentration inequalities for sequential dynamical systems of the unit interval, preprint

\bibitem{BH} W. Bahsoun, Ch. Bose, Y. Duan, Decay of correlation for random intermittent maps, \emph{Nonlinearity,} \textbf{27} (2014),  1543-1554

\bibitem{CR} J.-P. Conze, A. Raugi, Limit theorems for sequential expanding dynamical systems on $[0, 1]$, {\em Ergodic
theory and related fields}, {\bf 89121}, Contemp. Math., 430, Amer. Math. Soc., Providence, RI, 2007

\bibitem{DeMelo} W. de Melo, S. van Strien, {\em One-dimensional Dynamics}, Springer, Berlin, 1993

 \bibitem{GO} S. Gou\"{e}zel,  Central limit theorem and stable laws for intermittent maps {\em Probab. Theory Relat. Fields},
{\bf 128} 82–122, (2004)

\bibitem{GOT}C. Gupta, W. Ott, A. T\"or\"ok, Memory loss for time-dependent piecewise expanding systems in higher dimension, {\em Mathematical Research Letters}, {\bf 20},  (2013), 155-175

\bibitem{HH} H Hu, Decay of correlations for piecewise smooth maps with indifferent fixed points, {\em Ergodic Theory  and Dynamical Systems}, {\bf 24}, (2004), 495-524

\bibitem{LSV} C. Liverani, B. Saussol, S. Vaienti, A probabilistic approach to intermittency,   \textit{Ergodic theory and dynamical systems},  {\bf 19}, (1999), 671-685

\bibitem{OSY} W. Ott, M. Stenlund, L.-S. Young, Memory loss for time-dependent dynamical systems, {\em  Math. Res.
Lett.}, {\bf 16} (2009), pp. 463-475.

\bibitem{Sarig} O. Sarig,  Subexponential decay of correlations, {\em Invent. Math.},  {\bf 150} 629-653, (2002)

\bibitem{SS}W. Shen, S. Van Strien,  On stochastic stability of expanding circle maps with neutral fixed points, {\em Dynamical Systems, An International Journal},  {\bf 28}, (2013)

\bibitem{S} M. Stenlund, Non-stationary compositions of Anosov diffeomorphisms, {\em Nonlinearity} {\bf 24} (2011) 2991-3018

\bibitem{SYZ}M. Stenlund, L-S. Young, H. Zhang, Dispersing billiards with moving scatterers, {\em Comm. Math. Phys.}, to appear (2013)













		



    \end{thebibliography}
\end{document}